\newlength{\defbaselineskip}
\newcommand{\setlinespacing}[1]%
           {\setlength{\baselineskip}{#1 \defbaselineskip}}
\numberwithin{equation}{section}
\newtheorem{thm}{Theorem}[section]
\newtheorem{lem}[thm]{Lemma}
\newtheorem{prop}[thm]{Proposition}
\theoremstyle{definition}
\theoremstyle{remark}
\numberwithin{equation}{section}
\newcommand{\schr}{e^{it\Delta}}
\newcommand{\schrd}{e^{i(t-s)\Delta}}
\newcommand \mix[2] {L^{#1}_{t}L^{#2}_{x}}
\newcommand \mixx[2] {L^{#1}_{x}L^{#2}_{t}}
\newcommand \mi[2] {L^{#1}_{x}L^{#2}_{t}}
\newcommand \wtp[1] {\widetilde #1'}
\newcommand \mixq   {{\mixx rq}}
\newcommand \mip {{\mi {\wtp r}{\wtp q}}}
\newcommand \miq {{\mi rq}}
\newcommand \wtr {\wtp r}
\newcommand \wtq {\wtp q}
\begin{document}

\title[Inhomogeneous Strichartz estimates]
{On inhomogeneous Strichartz estimates for the Schr\"odinger
equation}
\author{Sanghyuk Lee}
\author{Ihyeok Seo}

\subjclass[2000]{Primary 35B45; Secondary 35J10}
\thanks{\textit{Key words and phrases.} Inhomogeneous Strichartz estimate, Schr\"odinger equation}
%\thanks{Supported in part by NRF grant 2012008373 (Republic of Korea).}

\address{Department of Mathematical Sciences, Seoul National University, Seoul 151-747, Republic of Korea}
\email{shklee@snu.ac.kr}
\address{School of Mathematics, Korea Institute for Advanced Study, Seoul 130-722, Republic of Korea}
\email{ihseo@kias.re.kr}

\maketitle

\begin{abstract}
In this paper  we consider inhomogeneous Strichartz estimates in the
mixed norm spaces which are given by taking temporal integration
before spatial integration. We obtain some new estimates, and
discuss about the necessary conditions.
\end{abstract}

\section{Introduction}

To begin with, let us consider the Cauchy problem
\begin{equation*}
\left\{
\begin{array}{ll}
iu_t+\Delta u=F(x,t),\quad (x,t)\in\mathbb{R}^n\times\mathbb{R},\\
u(x,0)=f(x).
\end{array}\right.
\end{equation*}
By Duhamel's principle we have the solution
\[u(x,t)=\schr f(x)-i\int_0^t \schrd F(s) ds.\]
Here $\schr$ is the free propagator which is given by
\[\schr f(x)=(2\pi)^{-n}\int_{\mathbb R^n} e^{i(x\cdot\xi-t|\xi|^2)} \widehat f(\xi) d\xi.\]
The  estimates for the solution in terms of $f$ and $F$ play important roles
in the study of nonlinear Schr\"odinger equations ({\it cf. \cite{C,T}}).
The control of solution $u$ actually consists of two parts, homogeneous $(F=0)$
and inhomogeneous $(f=0)$ part.

It is well known that the homogeneous Strichartz estimate
\begin{equation}\label{homo}
\big\|\schr f\big\|_{L_t^qL_x^r}\le C\|f\|_2
\end{equation}
holds if and only if $2/q=n(1/2-1/r)$, $q\ge 2$ and $(q,r,n)\neq
(2,\infty,2)$ (see \cite{GV,KT} and references therein). But
determining the optimal range of $(q,r)$ and $(\wtq,\wtr)$ for which
the inhomogeneous Strichartz estimate
\begin{equation}\label{inhomo}
\bigg\| \int_0^t \schrd F(s) ds\bigg\|_{L_t^qL_x^r}\le C\|F\|_{L_t^{\wtp q}L_x^{\wtp r}}
\end{equation}
holds is not completed yet. By duality the homogeneous estimates
imply some inhomogeneous estimates but it was observed that the
estimate~\eqref{inhomo} is valid on a wider range than what is given
by admissible pairs $(q,r)$, $(\wtq,\wtr)$ for the homogeneous
estimates~\eqref{homo} (see \cite{CW},\,\cite{K}). Foschi and
Vilela in their independent works (\cite{F2},\cite{Vi}) obtained the
currently best known range of $(q,r)$ and $(\wtq,\wtr)$ for which
\eqref{inhomo} holds. However, there still remain some gaps between
their range and the known necessary conditions. Also, see \cite{LS2}
for a new necessary condition and some weak endpoint estimates.

\subsection{Time-space estimates}
We now consider estimates in different mixed norms which are given
by taking time integration before spatial integration. We call
\eqref{homo} and \eqref{inhomo} \emph{space-time estimate}, and by
\emph{time-space estimate} we mean the estimate  given in
$L^r_xL^q_t$ norms; e.g. \!\!\eqref{tshomo},\,\eqref{iinhomo}. There
are estimates similar to the space-time estimates \eqref{homo}. More
generally, the estimates
\begin{equation}\label{tshomo}
\big\|e^{it\Delta}f\big\|_{L_x^rL_t^q}\leq C\|f\|_{\dot{H}^s}, \quad
s=n/2-2/q-n/r,
\end{equation}
have been of interest. Here $\dot{H}^s$ denotes the homogeneous
Sobolev space of order $s$. Even though \eqref{homo} and
\eqref{tshomo} have the same scaling, they are of different natures.
Especially, for time-space estimate Galilean invariance is no longer
valid in general. The condition $1/q+(n+1)/r\leq n/2$ is necessary
for \eqref{tshomo} even with frequency localized initial datum  $f$
as it is easily seen by using  Knapp's example. It is currently
conjectured that \eqref{tshomo} holds whenever $1/q+(n+1)/r\leq
n/2$, $2\le q<\infty$. When $n=1$, it is known to be true
\cite{KPV}. In higher dimensions \eqref{tshomo} is verified for
$q,r$ satisfying $1/q+(n+1)/r\leq n/2$, additionally $r>16/5$ when
$n=2$, and $r>2(n+3)/(n+1)$ when $n\geq3$ (\cite{LRV}).  The
estimate \eqref{tshomo} is closely related to the maximal
Schr\"odinger estimate  which has been studied to obtain almost
everywhere convergence to initial data. See \cite{Car, DK, Sj, V,
KPV,  LRV, Ro} and references therein for further discussions and
related issues. Also see \cite{L, B} for recent results.

\medskip

In this paper we aim to look for the optimal range of $(\widetilde{r}',r)$
for which the time-space inhomogeneous Strichartz estimate
\begin{equation}\label{iinhomo}
\bigg\| \int_0^t \schrd F(s) ds\bigg\|_{\miq}\le C\|F\|_\mip
\end{equation}
holds for some $q,\wtq$. Obviously, this is weaker than \eqref{inhomo}
if $q\le r$ and $\wtp q\geq\wtp r$ since one can get
\eqref{iinhomo} from \eqref{inhomo} via Minkowski's inequality.
However, as it turns out, the range for \eqref{iinhomo} is quite
different from that of \eqref{inhomo}. The currently known range of
$(1/\wtr,1/r)$ for which \eqref{inhomo} is valid  for some $q, \wtq$
is contained in the closed pentagon with vertices $(1/2,1/2)$, $C'$,
$S'$, $S$, $C$ (see Figure 1) and it is known that \eqref{inhomo}
fails unless $(1/\wtr,1/r)$ is contained in the closed pentagon with
vertices $(1/2,1/2)$, $C'$, $R'$, $R$,  $C$. We will show that
\eqref{iinhomo} is possible only if $(1/\wtr,1/r)$ is contained in
the closed trapezoid $B,$ $R,$ $R'$, $B'$ from which the points
$R,R'$ are removed. In \cite{F}  it was shown  that if $1\le
\wtr\leq2\leq r\le \infty$ and $|1/r+1/{\wtr}-1|<1/n$,  there are
$q, \wtq$ which allow the time delayed estimates in time-space norm.
But in  contrast to the space-time estimate \eqref{inhomo} the above
discussion  shows that mere existence of such $q, \wtq$ for time
delayed estimate is not enough to obtain \eqref{iinhomo} and
accurate information on the possible range of $q, \wtq$ is
important.

\smallskip

 To show \eqref{iinhomo} we work on Fourier transform side
by making use of the fact that the Duhamel part is similar to
multiplier of negative order (see \cite{CKLS, LS}). This allows us
to take advantage of localization in Fourier transform side which
plays important roles in our argument. We believe that this method
is more flexible than the conventional argument which heavily relies
on the dispersive estimate.

%figure%figure%figure%figure%figure%figure%figure%figure%figure%figure%figure%figure

\begin{figure}[t]
\label{figure} \centering \setlength{\unitlength}{0.00027in}
\begingroup\makeatletter\ifx\SetFigFont\undefined%
\gdef\SetFigFont#1#2#3#4#5{%
  \reset@font\fontsize{#1}{#2pt}%
  \fontfamily{#3}\fontseries{#4}\fontshape{#5}%
  \selectfont}%
\fi\endgroup%
{\renewcommand{\dashlinestretch}{30}
\begin{picture}(11000,11428)(0,-10)
\path(600,601)(10200,601)(10200,10201)(600,10201)(600,601)
\path(10200,601)(11325,601)(11400,601)
\path(11100.000,526.000)(11400.000,601.000)(11100.000,676.000)
\path(600,10201)(600,11401)
\path(675.000,11101.000)(600.000,11401.000)(525.000,11101.000)
\dottedline{100}(600,10201)(10200,601)
\dottedline{100}(600,5401)(10200,601) \dottedline{100}(5400,10201)(10200,601)
\dottedline{100}(600,5401)(5400,601) \dottedline{100}(5400,10201)(10200,5401)
\path(3000,3001)(7800,7801)
\dottedline{100}(600,10201)(3000,3001) \dottedline{100}(600,10201)(7800,7801)
\path(2520,4441)(2520,5721) \path(5080,8281)(6360,8281)
\path(2200,5401)(5400,8601)
\path(2200,5401)(3000,3001) \path(5400,8601)(7800,7801)
\path(2520,4441)(3000,3001) \path(6360,8281)(7800,7801)
\put(-300,11026){\makebox(0,0)[lb]{\smash{{{\SetFigFont{10}{12.0}{\rmdefault}{\mddefault}{\updefault}$\frac1r$}}}}}
\put(-750,10051){\makebox(0,0)[lb]{\smash{{{\SetFigFont{10}{12.0}{\rmdefault}{\mddefault}{\updefault}$(\frac12,\frac12)$}}}}}
\put(10950,76){\makebox(0,0)[lb]{\smash{{{\SetFigFont{10}{12.0}{\rmdefault}{\mddefault}{\updefault}$\frac1{\widetilde{r}'}$}}}}}
\put(9600,100){\makebox(0,0)[lb]{\smash{{{\SetFigFont{10}{12.0}{\rmdefault}{\mddefault}{\updefault}$(1,0)$}}}}}
\put(-250,100){\makebox(0,0)[lb]{\smash{{{\SetFigFont{10}{12.0}{\rmdefault}{\mddefault}{\updefault}($\frac12$,0)}}}}}
\put(1730,5301){\makebox(0,0)[lb]{\smash{{{\SetFigFont{10}{12.0}{\rmdefault}{\mddefault}{\updefault}$B$}}}}}
\put(5350,8731){\makebox(0,0)[lb]{\smash{{{\SetFigFont{10}{12.0}{\rmdefault}{\mddefault}{\updefault}$B'$}}}}}
\put(-2500,5300){\makebox(0,0)[lb]{\smash{{{\SetFigFont{10}{12.0}{\rmdefault}{\mddefault}{\updefault}$(\frac12,\frac{n-2}{2n})=C$}}}}}
\put(5000,10550){\makebox(0,0)[lb]{\smash{{{\SetFigFont{10}{12.0}{\rmdefault}{\mddefault}{\updefault}$C'=(\frac{n+2}{2n},\frac12)$}}}}}
\put(5100,100){\makebox(0,0)[lb]{\smash{{{\SetFigFont{10}{12.0}{\rmdefault}{\mddefault}{\updefault}$(\frac{n-1}{n},0)$}}}}}
\put(10350,5300){\makebox(0,0)[lb]{\smash{{{\SetFigFont{10}{12.0}{\rmdefault}{\mddefault}{\updefault}$(1,\frac1n)$}}}}}
\put(2350,5850){\makebox(0,0)[lb]{\smash{{{\SetFigFont{10}{12.0}{\rmdefault}{\mddefault}{\updefault}$P$}}}}}
\put(4620,8150){\makebox(0,0)[lb]{\smash{{{\SetFigFont{10}{12.0}{\rmdefault}{\mddefault}{\updefault}$P'$}}}}}
\put(2100,4100){\makebox(0,0)[lb]{\smash{{{\SetFigFont{10}{12.0}{\rmdefault}{\mddefault}{\updefault}$Q$}}}}}
\put(6360,8380){\makebox(0,0)[lb]{\smash{{{\SetFigFont{10}{12.0}{\rmdefault}{\mddefault}{\updefault}$Q'$}}}}}
\put(2570,2580){\makebox(0,0)[lb]{\smash{{{\SetFigFont{10}{12.0}{\rmdefault}{\mddefault}{\updefault}$R$}}}}}
\put(7850,7950){\makebox(0,0)[lb]{\smash{{{\SetFigFont{10}{12.0}{\rmdefault}{\mddefault}{\updefault}$R'$}}}}}
\put(3610,3320){\makebox(0,0)[lb]{\smash{{{\SetFigFont{10}{12.0}{\rmdefault}{\mddefault}{\updefault}$S$}}}}}
\put(7170,6900){\makebox(0,0)[lb]{\smash{{{\SetFigFont{10}{12.0}{\rmdefault}{\mddefault}{\updefault}$S'$}}}}}
\end{picture}
}
\caption{\small The points $B$, $C$, $P$, $Q$, $R$, $S$ and the dual points $B'$, $C'$, $P'$, $Q'$, $R'$, $S'$
when $n\ge 3$.}
\end{figure}

%figure%figure%figure%figure%figure%figure%figure%figure%figure%figure%figure%figure

\subsubsection*{\textbf{Necessary conditions}}
We now discuss the conditions on $(q,r)$ and $(\wtq,\wtr)$ which are
necessary for \eqref{iinhomo}. By scaling the condition
\begin{equation}\label{scale}
\frac1{\wtp q}-\frac 1q +\frac n2\left( \frac1{\wtp r}-\frac1{r}\right)=1
\end{equation}
should be satisfied. Using the examples in \cite{F2, Vi}, we see
that the conditions which are needed for \eqref{inhomo} are also
necessary for \eqref{iinhomo}:

\begin{align}\label{x-cond} \wtr< 2< r,
\quad \frac1\wtr-\frac1r\le \frac2n,\quad 1-\frac1n\le \frac1\wtr+\frac1r\le 1+\frac1n,
\\
\label{t-cond} \wtq\le q,\quad \frac1q<n(\frac12-\frac1r),\quad\frac1\wtq>1-n(\frac1\wtr-\frac12).
\end{align}
By considering  additional test functions, we get the following
conditions which will be shown later (see Section \ref{secsec4}):
\begin{align}
\label{q2}
\frac1{\widetilde{q}'}-\frac1q+(n+1)(\frac1{\widetilde{r}'}-\frac1r)\geq2,\\
\label{q3}
\frac1{\widetilde{q}'}-\frac1q\geq\frac{2n}r-n+1,\quad
\frac1{\widetilde{q}'}-\frac1q\geq n+1-\frac{2n}{\widetilde{r}'}.
\end{align}

To facilitate the statement of our results, for $n\ge 3$, let us
define points $B,$ $C,$ $P,$ $Q,$ $R,$ and $S$ which are contained
in $[\frac12,1]\times [0,\frac12]$ by setting
\begin{align*}
&B=(\frac{n+3}{2(n+2)},\frac{n-1}{2(n+2)}),\,\,\,C=(\frac12,\frac{n-2}{2n}),\,\,\, P=(\frac{n+2}{2(n+1)},\frac{n^2}{2(n+1)(n+2)}),\\
&Q=(\frac{n+2}{2(n+1)},\frac{n-2}{2(n+1)}),\,\,\,R=(\frac{n+1}{2n},\frac{n-3}{2n}),\,\,\,S=(\frac{n}{2(n-1)}, \frac{(n-2)^2}{2n(n-1)}),
\end{align*}
and we also define the dual points $B',$ $C'$, $P'$, $Q'$, $R'$, $S'$ by setting $X'=(1-b,1-a)$ when $X=(a,b)$.
(See Figure 1.)
Let $\mathcal N(n)$ be the closed trapezoid with vertices $B,$ $B'$, $R$, $R'$
from which the points $R,R'$ are removed.
Being combined with \eqref{scale}, \eqref{q2} gives
\begin{equation}\label{rcon1}
\frac 1\wtr -\frac1r\ge \frac{2}{n+2},
\end{equation}
and the first and second conditions in \eqref{q3} give
\begin{equation}
\label{rcon2}
n\big(1-\frac1{2\wtr}\big)\ge \frac{3n}{2r}, \quad \frac{3n}{2\wtr}\ge n\big(1-\frac{1}{2r}\big),
\end{equation} respectively.
Also, by \eqref{scale} and \eqref{t-cond}, we see that  $(1/\wtr,1/r)\neq R,R'$.
Hence, from  this, \eqref{x-cond}, \eqref{rcon1} and \eqref{rcon2},
it follows that \eqref{iinhomo} holds only if $(1/\wtr,1/r)\in \mathcal N(n)$.

\subsubsection*{\textbf{Sufficiency part}}
We will show a stronger  estimate
\begin{equation}\label{stro}
\bigg\| \int_{-\infty}^t \schrd F(s) ds\bigg\|_{\miq}\le C\|F\|_\mip,
\end{equation}
which implies \eqref{iinhomo} and $\| \int_{-\infty}^\infty\schrd
F(s) ds\|_{\miq}\le C\|F\|_\mip.$ As mentioned above, if $q\le r$
and $\wtp q\geq\wtp r$, from the known range of the space-time
estimate (\cite{F2,V}), one can get \eqref{stro} for $(1/\wtr,1/r)$
contained in the closed hexagon $\mathcal H$ with vertices $P$, $Q$,
$S$, $P'$, $Q'$, $S'$ from which  the line segments $[P,Q],[P',Q']$
and the points $S,S'$ are removed\,\footnote{\,In fact, when $q\le r$
and $\wtp q\geq\wtp r$, \eqref{scale} and \eqref{t-cond} are
satisfied   if $(1/\wtr,1/r)$ is contained in $\mathcal H$. So we
can use the known space-time estimate.}. We extend the range further
to include the triangular region $\Delta QRS$ and $\Delta Q'R'S'$.
It should be noted that no inhomogeneous space-time estimate
\eqref{inhomo} is known for $(1/\wtr,1/r)$ which is contained in the
interior of $\Delta QRS$ and $\Delta Q'R'S'$.

\begin{thm}\label{t-s thm}
Let $n\ge3$ and $\mathcal S(n)$ be the open hexagon with vertices $P,$ $Q,$ $R$, $P'$, $Q'$, $R'$ to which
the line segments $(P,P')$ and $(R,R')$ are added.
If $(1/\wtr,1/r)\in \mathcal S(n)$, then \eqref{stro} holds for some $q,\wtq$.
\end{thm}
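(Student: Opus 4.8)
The plan is to argue on the space--time Fourier transform side, following the approach of \cite{CKLS, LS}. Writing $u(x,t)=\int_{-\infty}^t\schrd F(s)\,ds$, one has $(i\partial_t+\Delta)u=F$ with $u\to0$ as $t\to-\infty$, so the space--time Fourier transform of $u$ is, up to a constant, $(\tau+|\xi|^2+i0)^{-1}\widehat F(\tau,\xi)$; thus the retarded Duhamel operator is (essentially) the Fourier multiplier of negative order $(\tau+|\xi|^2+i0)^{-1}$, and the gain hidden in this negative order is exactly what is invisible to the duality argument from \eqref{tshomo} and what permits the enlargement of the range. Alternatively one may first establish \eqref{stro} with $\int_{-\infty}^{\infty}$ in place of $\int_{-\infty}^{t}$ --- which amounts to treating the $\delta(\tau+|\xi|^2)$ part of the symbol by \eqref{tshomo} and its dual --- and then recover the retarded estimate by a Christ--Kiselev argument, legitimate on the interior of $\mathcal S(n)$ since there \eqref{scale} forces $\wtq<q$.

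First I would reduce, by a Littlewood--Paley decomposition in the spatial frequency together with the scaling relation \eqref{scale}, to the case in which $\widehat F$ is supported where $|\xi|\sim1$; since $(1/\wtr,1/r)$ lies in the \emph{open} hexagon there is $\varepsilon$-room in the exponents, which I would use to sum up the dyadic frequency pieces, and the two added edges $(P,P')$ and $(R,R')$ are then reached at the end by interpolation with the interior. For the frequency-localized piece I would split the multiplier dyadically in the modulation $\mu=|\tau+|\xi|^2|$: write $(\tau+|\xi|^2+i0)^{-1}=\sum_\mu m_\mu$ with $m_\mu$ supported in $A_\mu=\{|\xi|\sim1,\ |\tau+|\xi|^2|\sim\mu\}$ and $|m_\mu|\lesssim\mu^{-1}$. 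It then suffices to bound $T_\mu F=\mathcal F^{-1}[m_\mu\widehat F]$ from $\mip$ to $\miq$ with operator norm summable over the dyadic values of $\mu$.

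The core of the argument is the bound for $T_\mu$. Regarding $T_\mu$ as the composition of the restriction of the space--time Fourier transform to $A_\mu$ (a $\mu$-neighborhood of a compact piece of the paraboloid), multiplication by $m_\mu$ (a factor $\mu^{-1}$), and the corresponding extension, one is reduced to bounding the extension operator from $A_\mu$ into $\miq$ and --- for the $F$ side --- the restriction onto $A_\mu$ of the space--time Fourier transform of $F\in\mip$. Such a thickened-paraboloid extension estimate I would establish by slicing $A_\mu$ along the level sets $\tau+|\xi|^2=\sigma$, $|\sigma|\sim\mu$, each slice being a translate of a paraboloid piece so that the extension from it is $e^{it\sigma}$ (harmless in $L^q_t$) composed with a free propagator $\schr$; for small $\mu$ one refines this by decomposing $A_\mu$ into $\mu^{1/2}$-caps, parabolically rescaling each cap to unit scale where the extension estimate is elementary, and reassembling the caps by an almost-orthogonality (square-function type) estimate. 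Balancing the factor $\mu^{-1}$ from $m_\mu$ against the powers of $\mu$ produced by this rescaling yields $\|T_\mu F\|_\miq\lesssim\mu^{-\beta}\|F\|_\mip$ for a suitable $\beta=\beta(q,r,\wtq,\wtr)$; the conditions \eqref{rcon1} and \eqref{rcon2} --- that is, \eqref{q2} and \eqref{q3} --- are precisely what makes the extension estimates hold at the exponents in question and what forces $\beta>0$ so that $\sum_\mu\mu^{-\beta}<\infty$, while on the edge $(R,R')$ one has $\beta=0$ and sums instead with an extra logarithmic gain, and on $(P,P')$ the relevant slice estimate is an endpoint one.

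The step I expect to be the main obstacle is exactly this building block: proving the thickened-paraboloid extension bound for $T_\mu$ with a summable power $\mu^{-\beta}$ \emph{uniformly} over all of $\mathcal S(n)$, because over part of this region --- in particular near $Q,Q'$ and in the interiors of $\Delta QRS$ and $\Delta Q'R'S'$, where no space--time estimate \eqref{inhomo}, nor even a homogeneous time--space estimate \eqref{tshomo} at the target exponents, is available --- one cannot simply quote a known estimate for a single slice and must genuinely exploit the joint frequency/modulation localization, choosing the cap decomposition and parabolic rescaling so that only elementary unit-scale bounds and the available orthogonality enter. Secondary technical points are making the reduction to the retarded integral rigorous (the $+i0$ limit, or the verification $\wtq<q$ for Christ--Kiselev, which degenerates on $(R,R')$) and arranging the frequency sum, the modulation sum, and the $\sigma$-slicing so that no endpoint of the two edges $(P,P')$, $(R,R')$ is lost.
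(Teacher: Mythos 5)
The overall strategy in your proposal---viewing the Duhamel operator as a negative-order multiplier on the paraboloid, localizing in spatial frequency to $|\xi|\sim1$, and then decomposing dyadically in the modulation $\mu=|\tau+|\xi|^2|$---is the same as the paper's (the paper's $T_\delta$ is exactly your $T_\mu$). But there is a critical gap at the summation step. Once the scaling relation~\eqref{scale} is imposed, the bound for $\delta^{-1}T_\delta$ at the target exponents $(\wtr',r;\wtq',q)$ has $\delta$-exponent exactly zero: the sum over dyadic $\mu$ is at the \emph{critical} scaling for every point of $\mathcal S(n)$, not just on $(R,R')$. Your claim that the open-hexagon hypothesis ``forces $\beta>0$'' so that $\sum_\mu\mu^{-\beta}<\infty$ is incorrect; the openness of the hexagon lives in the $(1/\wtr',1/r)$ plane and does not translate into a positive power of $\mu$ once $q,\wtq$ are chosen to satisfy~\eqref{scale}. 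What the paper actually does at this step is invoke Bourgain's summation lemma (Lemma~\ref{summation}): one perturbs the spatial exponent $r$ slightly in both directions (keeping $\wtq,q$ fixed, hence breaking~\eqref{scale}) to produce two estimates with small positive and negative powers of $\delta$, applies the lemma to obtain a weak-type bound at the critical exponent, and then upgrades to strong type by real interpolation. This device, or an equivalent one, is indispensable, and your proposal does not supply it or any substitute.

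Two secondary points. Your ``alternative'' of treating the $\delta(\tau+|\xi|^2)$ part of the symbol via~\eqref{tshomo} and its dual and then applying Christ--Kiselev cannot close the argument: the known range of~\eqref{tshomo} is too narrow (it requires $r>2(n+3)/(n+1)$ for $n\ge3$), and the paper explicitly observes that using the known time--space homogeneous estimates does not extend the $(\wtr,r)$ range---the whole point is to exploit the extra $\mu^{-1}$ gain from the principal-value part of the symbol, which that route throws away. Also, the paper does not need Christ--Kiselev at all: the retarded structure is built into $T_\delta$ from the start since $\phi$ is supported in $(1/2,2)\subset(0,\infty)$. Finally, for the $|\xi|\sim1$, $|\tau+|\xi|^2|\sim\delta$ building block you propose a $\mu^{1/2}$-cap decomposition with parabolic rescaling and square functions; the paper instead proves the $L^{\wtr}_xL^2_t\to L^r_xL^2_t$ estimate (Proposition~\ref{radial}) by slicing radially and applying Tomas--Stein on spheres for the $(\wtr,r)=(2,\tfrac{2(n+1)}{n-1})$ endpoint, plus a direct kernel/stationary-phase bound for $(\wtr,r)=(1,\infty)$, and then interpolates with the $L^1_xL^{\wtq'}_t\to L^\infty_xL^q_t$ dispersive estimate to reach general $q\neq2$. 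This is considerably more elementary than the cap-decomposition machinery you flag as the ``main obstacle,'' and avoids it.
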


For $(1/\wtr, 1/r)$  contained in the region $\Delta
QRS\setminus[Q,R]$, the estimate \eqref{stro} is available if
$(\wtq,q)$ satisfies \eqref{scale}, \eqref{t-cond} and
additionally $\frac1q< n(\frac1{\widetilde{r}'}-\frac12),$
$\frac1{\widetilde{q}'}> 1-n(\frac12-\frac1r).$ Being combined
with \eqref{scale}, these additional conditions are due to the third
inequality of \eqref{locp} and its dual one. By duality the same
holds for $(1/\wtr, 1/r)$ which is contained in the region $\Delta
Q'R'S'\setminus [Q',R']$. Making use of the currently known
time-space homogeneous estimates \eqref{tshomo} ({\it cf. \cite{LRS,
LRV}}) together with the argument of this paper, it is possible to
obtain further estimates on a lager range of $q,\wtq$ but these
estimates are not enough to extend the range of $(\wtr,r)$.

\medskip

When $n=2$, \eqref{stro} holds if $(1/\wtr,1/r)$  is contained in
the open pentagon with vertices $P,$ $Q$, $(1,0)$, $Q'$, $P'$ to
which the line segment  $(P, P')$ is added but it is not new.
This just follows from the known range of the space-time estimate (\cite{F2, Vi}).
When $n=1$, it is possible to obtain the full range except some endpoint estimates.
In fact, from the necessary conditions, \eqref{iinhomo}
is possible only if $(1/\wtr,1/r)$ is contained in the closed
triangle $\Delta$ with vertices $(\frac23, 0),$ $(1,0)$,
$(1,\frac13)$.

\begin{thm}\label{thm1d}
Let $n=1$. Then \eqref{stro} holds for some $q$, $\wtq$ provided that $(1/\wtr,1/r)$ is contained in
$\Delta\setminus\big([(\frac23,0),(1,0)]\cup[(1,\frac13),(1,0)]\big)$.
In fact, \eqref{stro} holds if $q$, $\wtq$  satisfies $1<\wtq<2<q<\infty$ and
$1/{\wtr}-1/{r}+1/2\wtq-1/2q\ge1$.
\end{thm}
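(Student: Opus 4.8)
The plan is to work entirely on the Fourier transform side. Taking the full space--time Fourier transform, the retarded Duhamel operator $\mathcal IF(x,t)=\int_{-\infty}^t\schrd F(s)\,ds$ acts as multiplication: the transform of $\mathcal IF$ equals a constant multiple of $(\tau+\xi^2-i0)^{-1}$ times the transform of $F$. Thus $\mathcal I$ is a multiplier of negative order (of order $-2$ in the parabolic sense) whose symbol is smooth away from the parabola $\tau=-\xi^2$ in $\R^2$; this is the mechanism alluded to in the introduction. The natural strategy is therefore to decompose $\mathcal I$ into dyadic pieces according to the distance $|\tau+\xi^2|$ to that parabola, and, where needed, also according to the spatial frequency.

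Concretely, I would fix a Littlewood--Paley resolution $1=\sum_{\lambda\in 2^{\mathbb Z}}\beta(\cdot/\lambda)$, set $\widehat{\mathcal I_\lambda F}(\xi,\tau)=c\,\beta((\tau+\xi^2)/\lambda)(\tau+\xi^2-i0)^{-1}\widehat F(\xi,\tau)$, and treat one slab at a time by writing $\tau=-\xi^2+\rho$ with $|\rho|\sim\lambda$ and using the slicing identity $\mathcal I_\lambda F(x,t)=c\int_{|\rho|\sim\lambda}e^{it\rho}\rho^{-1}\big(\schr G_\rho\big)(x)\,d\rho$, where $\widehat{G_\rho}(\xi)=\widehat F(\xi,-\xi^2+\rho)$ is the slice of $\widehat F$ at height $\rho$ above the parabola. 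Minkowski's inequality in $\rho$, followed by the one-dimensional homogeneous time--space estimate \eqref{tshomo} applied to each $\schr G_\rho$ --- which in dimension one is available in the \emph{full} conjectured range $1/q+2/r\le1/2$, $2\le q<\infty$ by \cite{KPV} --- and then Hölder/Hausdorff--Young in the modulation variable $\rho$ over the interval $|\rho|\sim\lambda$ yields a per-slab bound $\|\mathcal I_\lambda F\|_\miq\lesssim\lambda^{-\varepsilon}\|P_\lambda F\|_\mip$ for a suitable $\varepsilon>0$; for the large slabs $\lambda\gtrsim1$ the power of $\lambda$ is already favourable, while for the small slabs $\lambda\ll1$ one must first apply a parabolic dilation so that the slab becomes a width-$\sim1$ neighbourhood of an arc of the parabola of length $\sim\lambda^{-1/2}$ and then invoke the sharp $L^p$ extension/restriction estimate for the parabola --- elementary in one dimension --- to produce the needed gain in $\lambda$. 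Summing the slabs is then carried out with a vector-valued (square-function) version of the per-slab estimate, using the essential disjointness of the modulation bands and Littlewood--Paley theory in the time variable, and converting the $L^2$-type quantities back to $\miq$- and $\mip$-norms by Sobolev embedding in $x$ and Minkowski (where $\wtq\le2$ is used). The free parameters $q,\wtq$ are chosen at the end subject only to \eqref{scale}, which is exactly what the displayed condition $1<\wtq<2<q<\infty$, $1/\wtr-1/r+1/2\wtq-1/2q\ge1$ records (given \eqref{scale} it is equivalent to $(1/\wtr,1/r)$ lying in the interior of $\Delta$ together with the segment $1/\wtr-1/r=2/3$).

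It is worth noting why this Fourier analysis cannot be bypassed. If one instead applies a Christ--Kiselev-type lemma for time--space norms (legitimate because $\wtq<q$) to replace $\int_{-\infty}^t$ by $\int_{\R}$, writes $\int_{\R}\schrd F(s)\,ds=\schr g$ with $g=\int e^{-is\Delta}F(s)\,ds$, and couples \eqref{tshomo} for $\schr g$ with its dual estimate for $g$ via a $TT^*$ argument, then \eqref{scale} forces both exponent pairs into the homogeneous range; a short computation shows this route reaches only $1/\wtr>\tfrac34$, so it misses precisely the part of $\Delta$ near the vertex $(\tfrac23,0)$ that the slab decomposition is designed to recover.

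I expect the main obstacle to be the small-modulation slabs: extracting a bound with a genuinely summable power of $\lambda$ forces a local-to-global restriction argument for the parabola, and one has to keep careful track of how the time--space Lebesgue exponents transform under the parabolic dilation that straightens the slab. The endpoint losses incurred there are also the reason the two boundary segments $[(\tfrac23,0),(1,0)]$ and $[(1,\tfrac13),(1,0)]$ --- the $L^\infty_x$ output endpoint and the $L^1_x$ datum endpoint, where in any event \eqref{stro} lies beyond the reach of the method --- must be removed from $\Delta$.
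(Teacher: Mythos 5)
Your opening decomposition into dyadic modulation slabs $|\tau+\xi^2|\sim\lambda$ is exactly the paper's $T_\delta$ decomposition \eqref{t-del}--\eqref{compu}, and the final Littlewood--Paley summation in $t$ also matches. But the crucial per-slab estimate is obtained in the paper by a completely different mechanism: no use is made of the homogeneous time--space estimate \eqref{tshomo} or of \cite{KPV} at all. Instead the authors establish $\|T_\delta F\|_{\mixx\infty2}\le C\delta^{1/2}\|F\|_{\mixx22}$ by Plancherel in $t$ and H\"older, dualize to reach the $L^1_x$ input endpoint, interpolate with the dispersive bound $\|T_\delta F\|_{\mixx\infty\infty}\le C\delta^{3/2}\|F\|_{\mixx11}$, and then --- this is the key step --- exploit the fact that the kernel $K_\delta$ is localized at spatial and temporal scale $\delta^{-1}$ to reduce to cubes $Q$ of side $\delta^{-1}$ and apply H\"older on each cube (estimate \eqref{1dlocal}). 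That localization step is what converts the $(\wtr,\wtq)=(1,\cdot)$ endpoint, which dispersion and duality supply, into the full range $1\le\wtr,\wtq\le2$ with the power $\delta^{\frac1\wtr-\frac1r+\frac12(\frac1\wtq-\frac1q)}$ recorded in \eqref{1d}, and it has no analogue in your write-up.

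That is where the genuine gap lies. After slicing and invoking \cite{KPV} you are holding $\int_{|\rho|\sim\lambda}|\rho|^{-1}\|G_\rho\|_{\dot H^s}\,d\rho$ with $\widehat{G_\rho}(\xi)=\widehat F(\xi,-\xi^2+\rho)$, which (since $|\xi|\sim1$) is $L^2_\xi$. Cauchy--Schwarz or Hausdorff--Young in $\rho$ then only gives $\|\mathcal I_\lambda F\|_{\miq}\lesssim\lambda^{-1/2}\|\widehat F\,\chi_{|\tau+\xi^2|\sim\lambda}\|_{L^2_{\xi,\tau}}$, i.e.\ $L^2$-type control of $F$ with a $\lambda^{-1/2}$ gain; this is not the stated bound $\lambda^{-\varepsilon}\|P_\lambda F\|_{\mip}$ and does not sum against the factor $\lambda^{-1}$ from \eqref{dyadic}. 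The step you describe as ``H\"older/Hausdorff--Young in $\rho$'' simply cannot trade the $\rho$-integral for $\mip$ norms with the required $\lambda$-power; the parabolic rescaling plus restriction estimate that you defer to for the small slabs is precisely where the work is, and you acknowledge it as the main obstacle without carrying it out. A smaller issue: the Christ--Kiselev aside is not on safe ground, since the Christ--Kiselev lemma is formulated for $L^q_tL^r_x$ norms with time on the outside and does not automatically transfer to $L^r_xL^q_t$; the paper never invokes it, precisely because the decomposition \eqref{dyadic} already handles the retarded kernel directly.
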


The rest of this paper is organized as follows: In Section \ref{secsec2}
we obtain some frequency localized estimates which
will be used in later sections. Then, using these estimates and a
summation method, we prove Theorem \ref{t-s thm} and \ref{thm1d} in Section \ref{secsec3}.
Nextly, we show the necessary conditions \eqref{q2}
and \eqref{q3} in Section \ref{secsec4}.

\medskip

Throughout this paper, the letter C stands for a constant which is
possibly different at each occurrence. In addition to the symbol
$~\widehat~~$, we use $\mathcal F(\cdot)$ to denote the Fourier
transform, and $\mathcal F^{-1}(\cdot)$ to denote the
inverse Fourier transform. Finally, we denote by $\chi_E$ the
characteristic function of a set $E$.

% ---------------------------------------------------------------------------------------

\section{Preliminaries}\label{secsec2}

In this section we prove several preliminary estimates which will be used for the
proof of Theorem~\ref{t-s thm}, which is to be shown in Section \ref{secsec3}.

Let us define the operator $T_\delta$
for dyadic numbers $\delta\in2^{\mathbb{Z}}:=\{2^z:z\in\mathbb{Z}\}$ by
\begin{equation}\label{t-del}
T_\delta F=\int \delta\phi(\delta(t-s))\schrd F(s) ds
\end{equation}
where $\phi$ is a smooth function supported in $(1/2,2)$ such that
$\sum_{k=-\infty}^\infty \phi(2^k t)=1,$ $ t>0.$
Then we may write
\begin{equation}\label{dyadic}
TF:=\int_{-\infty}^t \schrd F(s) ds=\sum_{\delta\in2^{\mathbb{Z}}} \delta^{-1}T_\delta F.
\end{equation}
By direct computation it is easy to see that
\begin{equation}\label{compu}
\widehat {T_\delta F}(\xi,\tau)=\widehat\phi(\frac{\tau+|\xi|^2}\delta) \widehat F(\xi,\tau) .
\end{equation}
By this dyadic decomposition in time, the boundedness problem for $T$
is essentially reduced to obtaining suitable bounds for $T_\delta$ in terms of $\delta$.
From this one may view    the operator $F\to \int_{-\infty}^t \schrd F(s) ds$ as
the multiplier operator of negative order $1$ which is associated to the paraboloid.

\begin{prop}\label{radial} Let $n\ge 2$.
Suppose that  Fourier transform of $F$ is supported in
$\{(\xi,\tau)\in\mathbb{R}^n\times \mathbb R: 1/2\le|\xi|\le 2\}$.
Then we have
\[
\|T_\delta F\|_{\mi{r}{2}}\le C\delta^{-\frac {n-1}2+\frac n\wtr}\|F\|_{\mi{\wtr}{2}}
\]
for $r,\wtr$ satisfying $1\le \wtr\le 2$ and  $(n+1)/r\le (n-1)(1-1/\wtr)$.
\end{prop}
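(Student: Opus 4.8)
The plan is to work entirely on the Fourier transform side, exploiting the multiplier formula \eqref{compu}. After a Fourier transform in the time variable, $T_\delta F$ has frequency--time Fourier transform $\widehat\phi((\tau+|\xi|^2)/\delta)\,\widehat F(\xi,\tau)$; since $\widehat\phi$ is Schwartz, this localizes $\tau$ to a neighborhood of $-|\xi|^2$ of width $\sim\delta$ (up to rapidly decaying tails). Thus for fixed $x$, $T_\delta F(x,\cdot)$ is essentially a slab-localized piece in $(\xi,\tau)$--space, where $\xi$ ranges over the annulus $1/2\le|\xi|\le2$. The strategy is to reduce the $L^{\wtr}_x L^2_t \to L^r_x L^2_t$ bound to an estimate that sees only this slab geometry: the relevant object is essentially a convolution in $x$ against the kernel obtained by integrating $e^{i(x\cdot\xi - t|\xi|^2)}$ against a bump supported on the $\delta$--slab around the paraboloid, restricted to the annulus $|\xi|\sim 1$.

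The key steps, in order: (i) use Plancherel in $t$ to write $\|T_\delta F\|_{L^r_xL^2_t}$ in terms of the $\tau$--Fourier transform, turning the $L^2_t$ norm into an $L^2_\tau$ norm and converting $T_\delta$ into multiplication by $\widehat\phi((\tau+|\xi|^2)/\delta)$ followed by inverse Fourier transform in $\xi$ only; (ii) for each fixed $\tau$, the operator $F(\cdot,\tau)\mapsto \mathcal F^{-1}_\xi[\widehat\phi((\tau+|\xi|^2)/\delta)\widehat{F}(\xi,\tau)](x)$ is convolution in $x$ with a kernel $K_{\delta,\tau}$ supported (in frequency) on the annulus intersected with the $\delta$--slab $\{|\tau+|\xi|^2|\lesssim\delta\}$, which is a $\sim 1\times\cdots\times 1\times\delta$ ``plank''; estimate $\|K_{\delta,\tau}\|_{L^p_x}$ for the appropriate $p$ via stationary phase / the known decay of Fourier transforms of measures on such planks; (iii) apply Young's inequality in $x$ with exponents determined by $1/r + 1 = 1/\wtr + 1/p$, i.e. $1/p = 1 - (1/\wtr - 1/r)$, and then Minkowski to put the $L^2_\tau$ norm inside; (iv) collect the power of $\delta$ coming from the $L^p_x$ kernel bound and check it equals $-\frac{n-1}{2}+\frac{n}{\wtr}$, using the constraint $(n+1)/r \le (n-1)(1-1/\wtr)$ to guarantee that the required $p$ lies in the range where the kernel estimate holds (this is where the hypothesis on $r,\wtr$ is consumed).

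The main obstacle I anticipate is step (ii): obtaining a sharp $L^p_x$ bound, uniform in $\tau$, for the kernel associated to a $\delta$--thickened piece of the paraboloid over an $O(1)$ annulus. The slab is not a ball but an anisotropic plank, so one cannot simply use the trivial volume bound; one needs the oscillation of $e^{-it|\xi|^2}$—equivalently, curvature of the paraboloid in the $n-1$ tangential directions—to extract decay. The natural tool is to foliate the slab by the level sets $\{\tau + |\xi|^2 = c\}$ for $|c|\le\delta$ (pieces of spheres of radius $\sim1$), apply the standard $L^p$ bound for the extension operator / Fourier transform of surface measure on these spheres, and integrate in $c$, picking up the factor $\delta$ from the thickness; keeping track of the exponents so that the endpoint case $(n+1)/r=(n-1)(1-1/\wtr)$ is included, and handling the rapidly decaying off-slab tails of $\widehat\phi$ (which contribute harmlessly by absolute summation), are the points that require care. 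Once the kernel bound $\|K_{\delta,\tau}\|_{L^p_x}\lesssim \delta^{\,n/\wtr - (n-1)/2}$ is in hand, the rest is routine application of Young and Minkowski.
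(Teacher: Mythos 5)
Your step (iii)---reducing the bound to $\sup_\tau\|K_{\delta,\tau}\|_{L^p_x}$ and then applying Young's inequality---cannot give the stated power of $\delta$; this is a genuine gap, not a technicality. The fixed-$\tau$ convolution kernel $K_{\delta,\tau}(x)=\int_{1/2\le|\xi|\le 2}e^{ix\cdot\xi}\,\widehat\phi\big(\tfrac{\tau+|\xi|^2}{\delta}\big)\,d\xi$ satisfies, essentially sharply, $|K_{\delta,\tau}(x)|\lesssim\delta$ for $|x|\lesssim 1$, $|K_{\delta,\tau}(x)|\lesssim\delta|x|^{-(n-1)/2}$ for $1\lesssim|x|\lesssim\delta^{-1}$, with rapid decay beyond $\delta^{-1}$. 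Integrating gives $\|K_{\delta,\tau}\|_{L^p_x}\sim\delta^{\min\left(1,\ \frac{n+1}{2}-\frac{n}{p}\right)}$. With the Young exponent $1/p=1+1/r-1/\widetilde{r}'$ and the target exponent $\alpha=-\tfrac{n-1}{2}+\tfrac{n}{\widetilde{r}'}$, one computes
\[
\tfrac{n+1}{2}-\tfrac{n}{p}=\alpha-\tfrac{n}{r}<\alpha ,
\]
so Young yields only $\delta^{\min(1,\alpha-n/r)}$, a strictly worse power for every finite $r$. At the Tomas--Stein endpoint $(\widetilde{r}',r)=(2,\tfrac{2(n+1)}{n-1})$, where $\alpha=1/2$, the kernel bound produces $\delta^{(2n+1-n^2)/(2(n+1))}$, which is $\delta^{1/6}$ for $n=2$ and a \emph{negative} power of $\delta$ for $n\ge3$. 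The point is that the $L^p$ norm of a $\delta$-slab multiplier kernel fed into Young is strictly weaker than the true operator estimate; that gap is exactly what the restriction theorem measures.

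The paper avoids this by proving the two endpoint cases directly as operator estimates and interpolating. For $(\widetilde{r}',r)=(2,\tfrac{2(n+1)}{n-1})$ it takes Plancherel in $t$ and applies the Tomas--Stein $L^2$ restriction estimate to the spheres $\{|\xi|=\rho\}$, $\rho\sim1$, integrating $\rho$ over the $\delta$-window---this is the correct version of your ``foliate and use extension bounds,'' but applied to the operator, not to a kernel norm. For $(\widetilde{r}',r)=(1,\infty)$ it uses that the full space--time kernel $K_\delta(y,s)$ is supported (up to rapid tails) in $|s|\sim\delta^{-1}$ and hence effectively in $|y|\sim\delta^{-1}$; it is this genuine spatial localization at scale $\delta^{-1}$---which the fixed-$\tau$ kernel $K_{\delta,\tau}$ does \emph{not} have for small $|x|$---that converts the $|y|^{-(n-1)/2}$ Bessel decay into the extra factor $\delta^{(n-1)/2}$ on top of the $\delta$ from the radial integration, giving $\delta^{(n+1)/2}$. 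To repair your argument you would need to replace step (iii) by the restriction estimate for one endpoint and by the time-localized kernel computation for the other, i.e.\ essentially the paper's proof.
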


\begin{proof}
In view of interpolation it is enough to consider the cases
$(\wtr, r)=(2,\frac{2(n+1)}{n-1})$ and $(1,\infty)$.
This actually gives the estimates along the line $(n+1)/r=(n-1)(1-1/\wtr)$.
The other estimates follow from  Bernstein's inequality
because the spatial Fourier transform of $F$ is compactly supported.

\smallskip

\noindent\textit{The case $(\wtr, r)=(2,\frac{2(n+1)}{n-1})$.}
By duality it is enough to show that
\begin{equation*}
\|T_\delta F \|_{\mi 22}\le C\delta^\frac12\|F\|_{\mi {\frac{2n+2}{n+3}}2}.
\end{equation*}
Since $\widehat F(\cdot, \tau)$ is supported in $\{|\xi|\sim 1\}$,
by \eqref{compu} and Plancherel's theorem we have
\[\|T_\delta F \|_{\mi 22}^2\le C\iint_{1/2\leq|\xi|\leq2}
\big|\widehat\phi(\frac{\tau+|\xi|^2}\delta) \widehat F(\xi,\tau)\big|^2 d\xi d\tau.\]
So we are reduced to showing that
\begin{equation}\label{ra}
\iint_{1/2\leq|\xi|\leq2}  \big|\widehat\phi(\frac{\tau+|\xi\big|^2}\delta) \widehat
F(\xi,\tau)\big|^2 d\xi d\tau\le C\delta \|F\|_{\mi {\frac{2n+2}{n+3}}2}^2.
\end{equation}
Then the left hand side equals to
\[\iint_{\frac12}^2 \big|\widehat\phi(\frac{\tau+r^2}\delta)\big|^2
\int_{S^{n-1}} |\mathcal F_x\mathcal F_t F(r\theta,\tau)|^2
d\theta\, r^{n-1}dr d\tau.\] Using Tomas-Stein theorem \cite{St}
($L^2$-restriction estimate  to the sphere $rS^{n-1}$, $r\sim 1$),
we see that
$$\int_{S^{n-1}} |\mathcal F_x\mathcal F_t F(r\theta,\tau)|^2 d\theta \le
C\|\mathcal F_t F(\cdot,\tau)\|_{L_x^{\frac{2n+2}{n+3}}}^2.$$
Taking integration in $r$, it follows that
\[\iint_{1/2\leq|\xi|\leq2}  \big|\widehat\phi(\frac{\tau+|\xi|^2}\delta) \widehat
F(\xi,\tau)\big|^2 d\xi d\tau\le C\delta \|\mathcal F_t F(\tau)\|_{L^2_\tau L^{\frac{2n+2}{n+3}}_x}^2.\]
By Minkowski's inequality and Plancherel's theorem, we get \eqref{ra}.

\smallskip

\noindent\textit{The case $(\wtr, r)=(1,\infty)$.}
Note that $T_\delta F$ can be written as
\[ T_\delta F(x,t)=\iint K_\delta(x-y,t-s) F(y,s) dyds,\]
where
\begin{align}
K_\delta(y,s)&=\delta\phi(\delta s) \iint e^{i(-sr^2+ry\cdot\theta)} \psi(r) d\theta dr\label{kernel}\\
&=\iiint
\widehat\phi(\frac{\tau+r^2}\delta)e^{i(s\tau+ry\cdot\theta)}
 \psi(r) d\theta dr d\tau \nonumber
\end{align}
and $\psi\in C_0^\infty(1/2,2)$. Since $|K_\delta(y,s)|\leq C|\delta\phi(\delta s)|$, by Young's inequality, we have
$\|T_\delta F\|_{L_x^\infty L_t^2}\leq C\|F\|_{L_x^1L_t^2}$. So we  may assume that $\delta\lesssim1$.

By the choice of $\phi$, $K_\delta\neq 0$ for $s\sim \delta^{-1}$.
Hence by non-stationary phase method, we see that
$|K_\delta(y,s)|\le C\delta^{M}$ if $|y|\le \delta^{-1}/100$
and $|K_\delta(y,s)|\le C(1+|y|)^{-N}$ if $|y|\ge 100\delta^{-1}.$
Let us set
$\chi_\delta(y)=\chi_{\{\delta^{-1}/100\le |y|\le 100\delta^{-1}\}}$,
$\widetilde K_\delta(y,s)=K_\delta(y,s)\chi_\delta(y)$,
and
\[ \widetilde T_\delta F(x,t)=\iint \widetilde K_\delta(x-y,t-s)F(y,s)dyds.\]
Then it is enough to show that
\begin{equation}\label{ra2}
\|\widetilde T_\delta F\|_{\mi \infty 2}\le C\delta^\frac{n+1}2\| F\|_{\mi 12}.
\end{equation}
Form  \eqref{kernel}, it follows that
\begin{align*}
\mathcal F_t(\widetilde T_\delta F)&(x,\tau)\\
=&\int\mathcal F_tF(y,\tau)\int\widehat\phi(\frac{\tau+r^2}\delta)
\chi_\delta(x-y)\psi(r)\left(\int_{S^{n-1}} e^{ir(x-y)\cdot\theta}d\theta \right)drdy.
\end{align*}
Hence by plancherel's theorem we see that
$\|\widetilde{T}_\delta F(x,\cdot)\|_{L^2_t}^2$ is bounded by
$$\int \left[\int|\mathcal F_t F(y,\tau)|\int\Big|\widehat\phi(\frac{\tau+r^2}\delta)\chi_\delta(x-y)\psi(r)
\int_{S^{n-1}} e^{ir(x-y)\cdot\theta}d\theta\Big| drdy\right]^2 d\tau.$$
By using the fact that
$\int e^{ir x\cdot\theta} d\theta=O(|x|^{-\frac{n-1}2})$
for large $|x|$, and taking integration in $r$,
\begin{align*}
\|\widetilde{T}_\delta F(x,\cdot)\|_{L^2_t}^2
&\leq
C\delta^{n+1}\int\left(\int |\mathcal F_t F(y,\tau)| dy\right)^2 d\tau.
\end{align*}
By Minkowski's inequality and Plancherel's theorem,
$$\| \mathcal F_t F \|_{L^2_\tau L^1_x}\le \| \mathcal F_t F \|_{L^1_x L^2_\tau }=\|  F \|_{L^1_x L^2_t}.$$
Hence  we get~\eqref{ra2}.
\end{proof}

Throughout this paper we will use the following summation lemma several times which
is due to Bourgain \cite{Bour}  (Also see \cite{CSWW}  for a generalization.)
The lemma is a  version of Lemma 2.3 in \cite{LS} for Banach-valued functions.
(For a proof we refer the reader to \cite{LS}.)

\begin{lem}\label{summation}
Let $\varepsilon_1,\varepsilon_2>0$.  Let $1\le q\le \infty$ and
$1\le r_1, r_2<\infty$.  Suppose that $f_1(y,z),f_2(y,z),$
$...,f_j(y,z),...$ be functions defined on $\mathbb R^l\times
\mathbb R^m$ which satisfies that $\|f_j\|_{L^{r_1}_yL^{q}_z}\le
M_12^{\varepsilon_1j}$ and $\|f_j\|_{L^{r_2}_yL^{q}_z}\le
M_22^{-\varepsilon_2j}$ for $1\leq r_1, r_2<\infty$. Then
\[\|\sum f_j\|_{L^{r,\infty}_yL^{q}_z}\le CM_1^{\theta}M_2^{1-\theta},\]
where $\theta=\varepsilon_2/(\varepsilon_1+\varepsilon_2)$ and
$1/r=\theta/{r_1}+(1-\theta)/{r_2}$.  Here we denote by $L^{r,\infty}_y$ the
weak $L^r$ space.
\end{lem}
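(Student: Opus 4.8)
The plan is to carry out Bourgain's real-interpolation argument: reduce the Banach-valued inequality to a scalar one, split the series $\sum_j f_j$ at an index depending on the level $\alpha$, estimate the ``low'' part with the $M_1$-bound and the ``high'' part with the $M_2$-bound, and then optimize the splitting index. Set $g_j(y)=\|f_j(y,\cdot)\|_{L^q_z}$ and $g(y)=\|(\sum_j f_j)(y,\cdot)\|_{L^q_z}$. By the triangle inequality in $L^q_z$ (valid for every $1\le q\le\infty$) one has $g(y)\le\sum_j g_j(y)$ for a.e.\ $y$, and since $\|\cdot\|_{L^{r,\infty}_y}$ is monotone, it suffices to prove
\[
\Big\|\sum_j g_j\Big\|_{L^{r,\infty}_y}\le CM_1^{\theta}M_2^{1-\theta}
\]
for nonnegative scalar functions $g_j$ on $\mathbb R^l$ satisfying $\|g_j\|_{L^{r_1}_y}\le M_1 2^{\varepsilon_1 j}$ and $\|g_j\|_{L^{r_2}_y}\le M_2 2^{-\varepsilon_2 j}$. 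We may assume $M_1,M_2>0$ (otherwise all $g_j$ vanish), and, extending the sequence by $g_j\equiv 0$ for $j\le 0$ (which preserves both hypotheses), we may let $j$ range over all of $\mathbb Z$.

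Fix $\alpha>0$ and an integer $N$ to be chosen, and write $\sum_j g_j=G_N^{\mathrm{lo}}+G_N^{\mathrm{hi}}$ with $G_N^{\mathrm{lo}}=\sum_{j\le N}g_j$ and $G_N^{\mathrm{hi}}=\sum_{j>N}g_j$. Summing geometric series via the triangle inequality in $L^{r_1}_y$ and $L^{r_2}_y$ (this is where $\varepsilon_1,\varepsilon_2>0$ enters) gives $\|G_N^{\mathrm{lo}}\|_{L^{r_1}_y}\le C_{\varepsilon_1}M_1 2^{\varepsilon_1 N}$ and $\|G_N^{\mathrm{hi}}\|_{L^{r_2}_y}\le C_{\varepsilon_2}M_2 2^{-\varepsilon_2 N}$. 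Since $\{\sum_j g_j>\alpha\}\subseteq\{G_N^{\mathrm{lo}}>\alpha/2\}\cup\{G_N^{\mathrm{hi}}>\alpha/2\}$, Chebyshev's inequality (here $r_1,r_2<\infty$ is used) yields
\[
\bigl|\{\,\sum\nolimits_{j} g_{j}>\alpha\,\}\bigr|\le C\alpha^{-r_1}\bigl(M_1 2^{\varepsilon_1 N}\bigr)^{r_1}+C\alpha^{-r_2}\bigl(M_2 2^{-\varepsilon_2 N}\bigr)^{r_2}.
\]
Now choose $N$ to be the nearest integer to the value that equates the two terms, namely $N\approx\bigl[(r_1-r_2)\log_2\alpha-r_1\log_2 M_1+r_2\log_2 M_2\bigr]\big/(\varepsilon_1 r_1+\varepsilon_2 r_2)$; rounding costs only a multiplicative constant. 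Substituting this choice and simplifying the exponents, the right-hand side becomes $\le C\,(M_1^{\theta}M_2^{1-\theta})^{r}\alpha^{-r}$ precisely when $\theta=\varepsilon_2/(\varepsilon_1+\varepsilon_2)$ and $1/r=\theta/r_1+(1-\theta)/r_2$. Taking the supremum over $\alpha>0$ gives the displayed weak bound, hence the lemma.

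The argument has no essential obstacle; what needs care is the arithmetic in the last step: one must verify that after balancing, the measure estimate carries exactly the homogeneity $\alpha^{-r}$ with $1/r=\theta/r_1+(1-\theta)/r_2$ and that the constant splits as $M_1^{\theta r}M_2^{(1-\theta)r}$. One must also dispose of the case in which the optimal real $N$ falls outside the range where the low part is nonempty, which is handled by simply taking $N=0$ (so $G_0^{\mathrm{lo}}$ is empty) and checking that the high-part bound alone already suffices in that regime, and the degenerate case $r_1=r_2$, which is subsumed since the balancing value of $N$ is then independent of $\alpha$ and the same computation applies. For the full details we refer to \cite{LS, Bour}.
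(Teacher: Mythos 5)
Your argument is correct: the reduction to scalar functions via the triangle inequality in $L^q_z$ handles the Banach-valued aspect, and the level-dependent splitting with Chebyshev and the balancing of the two terms reproduces exactly the exponents $\theta=\varepsilon_2/(\varepsilon_1+\varepsilon_2)$ and $1/r=\theta/r_1+(1-\theta)/r_2$. The paper itself gives no proof, deferring to Lemma 2.3 of \cite{LS} (originally Bourgain \cite{Bour}), and your proof is essentially that same standard argument, so there is nothing further to compare.
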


Using this lemma, we remove the assumption that the spatial Fourier
transform of $F$ is supported in $\{|\xi|\sim 1\}$.

\begin{prop}\label{basic} Let $n\ge 3$.
Suppose that the spatial Fourier transform of $F$ is supported in $\{(\xi,\tau)\in\mathbb{R}^n\times \mathbb R: |\xi|\le 2\}$.
Then we have
\[\|T_\delta F\|_{\mi{r}{2}}\le C\delta^{-\frac {n-1}2+\frac n\wtr}\|F\|_{\mi{\wtr}{2}}\]
for $r,\wtr$ satisfying $n/(n-1)< \wtr\le 2$ and $1/r+1/\wtr\le(n-1)/n$.
\end{prop}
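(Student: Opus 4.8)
The plan is to reduce Proposition~\ref{basic} to Proposition~\ref{radial} by a Littlewood--Paley decomposition in the spatial frequency, followed by an application of the summation Lemma~\ref{summation}. Write $F=\sum_{j\ge 0} F_j$ where $\widehat{F_j}$ is supported in $\{|\xi|\sim 2^{-j}\}$ for $j\ge 1$ and in $\{|\xi|\le 1\}$ for $j=0$ (this is consistent with the hypothesis $\supp_\xi \widehat F\subset\{|\xi|\le 2\}$). For each piece $F_j$, a parabolic rescaling $\xi\mapsto 2^j\xi$, $\tau\mapsto 2^{2j}\tau$ transforms the frequency annulus $\{|\xi|\sim 2^{-j}\}$ into $\{|\xi|\sim 1\}$ and transforms $T_\delta$ into $T_{\delta'}$ with $\delta'=2^{-2j}\delta$; keeping track of the Jacobians in the mixed norm $\mi{r}{2}$, Proposition~\ref{radial} applied to the rescaled function should yield, for each admissible endpoint pair on the line $(n+1)/r=(n-1)(1-1/\wtr)$, an estimate of the form
\[
\|T_\delta F_j\|_{\mi{r}{2}}\le C\, 2^{j\beta(\wtr,r)}\,\delta^{-\frac{n-1}2+\frac n\wtr}\|F_j\|_{\mi{\wtr}{2}},
\]
where $\beta(\wtr,r)$ is an explicit linear expression in $1/\wtr,1/r$ coming from the scaling. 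The key point is the \emph{sign} of $\beta$: one chooses two endpoint pairs $(\wtr_1,r_1)$ and $(\wtr_2,r_2)$ on the line $(n+1)/r=(n-1)(1-1/\wtr)$ such that $\beta(\wtr_1,r_1)>0$ and $\beta(\wtr_2,r_2)<0$, i.e. one lying on either side of the target exponent; then Lemma~\ref{summation} (with $z$ the time variable, $y$ the space variable, $q=2$, $f_j=T_\delta F_j$ up to the $\delta$ normalization, and $\varepsilon_1,\varepsilon_2$ the two values $|\beta|$) sums the pieces and produces the bound in the \emph{Lorentz} space $L^{r,\infty}_xL^2_t$ at the interpolated exponent.

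The next step is to upgrade the weak-type conclusion $L^{r,\infty}_xL^2_t$ to the strong bound $L^r_xL^2_t$ claimed in the statement. This is done by the standard real-interpolation trick: the set of $(\wtr,r)$ for which the weak bound holds is open (it is governed by the strict inequalities $n/(n-1)<\wtr\le 2$ and $1/r+1/\wtr\le (n-1)/n$, and the two sign conditions on $\beta$ hold on an open set), so any target pair can be realized as an interpolation of two nearby pairs for which the weak estimates hold, and real interpolation between two weak-type estimates at different exponents gives the strong estimate in between. One should check that the power of $\delta$, namely $-\tfrac{n-1}2+\tfrac n\wtr$, is preserved under this interpolation; since it is affine in $1/\wtr$ and the interpolation is along the relevant line, this is automatic.

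The main obstacle I anticipate is purely bookkeeping but genuinely delicate: getting the rescaling exponent $\beta(\wtr,r)$ exactly right, and then verifying that on the line $(n+1)/r=(n-1)(1-1/\wtr)$ (where Proposition~\ref{radial} is sharp) there really do exist two endpoints straddling the target with $\beta$ of opposite signs throughout the region $n/(n-1)<\wtr\le 2$, $1/r+1/\wtr\le (n-1)/n$. In particular one must confirm that the restriction $r,\wtr$-region in Proposition~\ref{basic} is precisely the projection of the usable endpoint configurations, and that the constraint $n\ge 3$ (rather than $n\ge 2$) is what is needed for the two sign conditions to be simultaneously satisfiable — this is presumably why the hypothesis is strengthened from $n\ge 2$ in Proposition~\ref{radial} to $n\ge 3$ here. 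The $j=0$ term (frequencies $|\xi|\lesssim 1$, no annulus) is handled separately and directly, e.g. by Bernstein's inequality together with the $|\xi|\sim 1$ case, since there is no summation issue there.
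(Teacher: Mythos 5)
Your overall strategy (dyadic Littlewood--Paley decomposition in $|\xi|$, parabolic rescaling to reduce each piece to Proposition~\ref{radial}, the Bourgain-type summation lemma to sum the pieces, then real interpolation to upgrade the weak type bound) is exactly the route the paper takes. The gap is in the key summation step, and it is not merely bookkeeping.

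You propose to pick two endpoint pairs $(\wtr_1,r_1)$ and $(\wtr_2,r_2)$ \emph{on} the line $(n+1)/r=(n-1)(1-1/\wtr)$ and arrange that the rescaling exponent $\beta$ has opposite signs at the two points. The rescaling exponent is in fact $\beta(\wtr,r)=n\big(\tfrac1r+\tfrac1\wtr-\tfrac{n-1}{n}\big)$, and on that endpoint line one computes
\[
\beta=\frac{1}{n+1}\Big(\frac{2n}{\wtr}-(n-1)\Big),
\]
which is strictly positive for every $\wtr\le 2$. So there are no two points on that line with $\beta$ of opposite signs; your sign-straddling step cannot be carried out as described. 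There is a second, independent problem: even if you could vary $\wtr$, the two input estimates would then carry \emph{different} right-hand side norms $\|F\|_{\mi{\wtr_1}{2}}$ and $\|F\|_{\mi{\wtr_2}{2}}$, so Lemma~\ref{summation} would output a bound by $\|F\|_{\mi{\wtr_1}{2}}^\theta\|F\|_{\mi{\wtr_2}{2}}^{1-\theta}$, which does not control $\|F\|_{\mi{\wtr}{2}}$ (H\"older's inequality goes the wrong way).

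The fix, which is what the paper actually does, is to \emph{freeze} $\wtr$ and vary only $r$. Proposition~\ref{radial} is valid not just on the endpoint line but on the whole region $(n+1)/r\le(n-1)(1-1/\wtr)$ (the off-line cases come from Bernstein, as noted there), and for fixed $\wtr$ with $\wtr>n/(n-1)$ the exponent $\beta(\wtr,r)=n(\tfrac1r+\tfrac1\wtr-\tfrac{n-1}{n})$ decreases through $0$ as $1/r$ decreases from the endpoint value toward $0$. So one picks $r_1<r<r_2$ with $\beta(\wtr,r_1)>0>\beta(\wtr,r_2)$, both admissible for Proposition~\ref{radial} and with the same $\wtr$; then $M_1=M_2=\|F\|_{\mi{\wtr}{2}}$ and Lemma~\ref{summation} applies cleanly. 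This is also where the hypothesis $n\ge3$ enters: it guarantees $n/(n-1)<2$, so the window $n/(n-1)<\wtr\le2$ is nonempty and $\beta$ can turn negative as $r\to\infty$; it is not about making the two sign conditions compatible. Finally, the corner $(1/\wtr,1/r)=(1/2,(n-2)/(2n))$ has $\wtr=2$ excluded from the interior argument and is treated directly via the endpoint homogeneous space--time estimate and Minkowski, a case your write-up does not address.
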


\begin{proof} Since we are assuming that  $F$ is supported in $\{(\xi,\tau): |\xi| \le
2\}$, we may break $T_\delta$ so that
\[T_\delta=\sum_{j\ge -1}T_\delta^j,\]
where $T_\delta^j$ is given by
\[\widehat {T_\delta^j F}(\xi,\tau)=\widehat\phi(\frac{\tau+|\xi|^2}\delta)
\phi(2^j|\xi| )\widehat F(\xi,\tau).\] From rescaling we have
\[ T_\delta^j F(x,t)=T_{2^{2j}\delta} F_j(2^{-j}x, 2^{-2j}t),\]
where $F_j=\phi(|D|)F(2^{j}\cdot, 2^{2j}\cdot)$.
Thus, by Proposition \ref{radial} we see that
\begin{equation}\label{p21}
\|T_\delta^j F\|_{\mi{r}{2}}\le
C\delta^{-\frac {n-1}2+\frac n\wtr}2^{jn(\frac{1}r+\frac1\wtr-\frac{n-1}n)}\|F\|_{\mi{\wtr}{2}}
\end{equation}
for $r,\wtr$ satisfying $1\le \wtr\le 2$ and $(n+1)/r\le
(n-1)(1-1/\wtr)$. If $1/r+1/\wtr<(n-1)/n$, we can sum to get  the
desired estimate. To the estimates for  the endpoint cases $1/r+1/\wtr=(n-1)/n$, we use
Lemma \ref{summation}.

Fix $\wtr,r$ such that  $1/r+1/\wtr=(n-1)/n$ and $n/(n-1)< \wtr<2$.
We now choose $r_1, r_2,$ so that  $(n+1)/r_i\le (n-1)(1-1/\wtr)$,
$i=1,2,$ and
$$\frac1{r_2}+\frac1\wtr<\frac{n-1}n<\frac1{r_1}+\frac1\wtr.$$
Note that $(1/\widetilde{r}',1/r)$ is on the open segment joining
$(1/\widetilde{r}',1/r_1)$ and $(1/\widetilde{r}',1/r_2)$. From
\eqref{p21} we see
\begin{equation*}
\|T_\delta^j F\|_{\mi{r_i}{2}}\le
C\delta^{-\frac {n-1}2+\frac n\wtr}2^{jn(\frac1{r_i}+\frac1\wtr-\frac{n-1}n)}\|F\|_{\mi{\wtr}{2}}
\end{equation*}
for $i=1,2$. We now can apply Lemma \ref{summation} with
$\varepsilon_i=n|\frac1{r_i}+\frac1\wtr-\frac{n-1}n|$. So, we get
\begin{equation*}
\|T_\delta^j F\|_{\mi{r,\infty}{2}}\le
C\delta^{-\frac {n-1}2+\frac n\wtr}\|F\|_{\mi{\wtr}{2}}.
\end{equation*}
This weak type estimate  for $1/r+1/\wtr=(n-1)/n$ and $n/(n-1)<
\wtr<2$ can be strengthened to strong type by real interpolation.
Lastly, the estimate for  $(1/\wtr, 1/r)=(\frac12,\frac{n-2}{2n})$
can be obtained directly from
\begin{equation}\label{qqqqq}
\|T_\delta F\|_{\mix{2}{\frac{2n}{n-2}}}\le C\delta^\frac12 \|F\|_{\mix 22}
\end{equation}
via Minkowski's inequality.
This also follows from the endpoint space-time homogeneous estimate.
Indeed, by H\"older's inequality we see
$$|T_\delta F(x,t)|\leq C\delta^{\frac12}\big\|\schrd F(s)\big\|_{L_s^2},$$
and so
\[\|T_\delta F\|_{\mix{2}{\frac{2n}{n-2}}}\le C
\delta^{\frac12}\big\|\schrd F(s)\big\|_{L_s^2\mix{2}{\frac{2n}{n-2}}}\]
by Minkowski's inequality.
By applying \eqref{homo} with $(q,r)=(2,2n/(n-2))$, we get \eqref{qqqqq}.
\end{proof}

% ---------------------------------------------------------------------------------------

\section{Sufficiency part: Proofs of Theorem \ref{t-s thm} and \ref{thm1d}} \label{secsec3}

In this section we will prove Theorem \ref{t-s thm} and \ref{thm1d}.
We may assume that the space time Fourier transform of
$F$ is supported in the set $\{(\xi,\tau): |\xi| \le 2, \, |\tau|\le
2\}$ since  this additional assumption can be simply removed by
rescaling together with the condition \eqref{scale}.

\subsubsection*{\textbf{Proof of Theorem \ref{t-s thm}}} Since we already have the estimates in the hexagon
$\mathcal{H}$, to show \eqref{stro} it suffices to show the
estimates when $(1/\widetilde{r}',1/r)\in\big(\Delta QRS\cup \Delta
Q'R'S'\big)\setminus\big([Q,R]\cup[Q',R']\big)$. By duality and
complex interpolation, it is enough to show the case where
$(1/\widetilde{r}',1/r)\in\Delta QRS\setminus([Q,R]\cup  [Q,S])$.

Let $\Omega=\Omega(n)$ denote the closed triangle with vertices $C$,
$(\frac{n-1}n,0)$, $(1,0)$ from which the point $(\frac{n-1}n,0)$ is
removed. The proof is then based on the following estimate:

For $(1/\wtr,1/r)\in\Omega$,
\begin{equation}\label{lolo}
\|T_\delta F\|_{\miq}\le C\delta^{\frac1\wtq-\frac1q+\frac n2(\frac1\wtr-\frac1r)}\|F\|_{\mip}
\end{equation}
holds provided that
\begin{equation*}
-\frac n2(\frac1r+\frac1{\widetilde{r}'}-1)\leq\frac1q\leq\frac1{\widetilde{q}'}
\leq 1+\frac n2(\frac1r+\frac1{\widetilde{r}'}-1).
\end{equation*}

This can be shown by interpolating the case
$(1/\widetilde{r}',1/r)=(1,0)$ and the case in which
$(1/\widetilde{r}',1/r)$ is on  the line segment joining $C$ and
$(\frac{n-1}n,0)$. Since Proposition~\ref{basic} already gives the
estimates on the line segment, we need only to show that
\begin{equation*}
\|T_\delta F\|_{L_x^\infty L_t^q}\le C\delta^{\frac1\wtq - \frac1q+\frac n2} \|F\|_{L_x^1L_t^{\widetilde{q}'}}
\end{equation*}
for $1\le \wtq\leq q\le \infty$. By Minkowski's inequality, it is
enough to show that
\begin{equation*}
\|T_\delta F\|_{L_t^qL_x^\infty}\le
\delta^{\frac1\wtq - \frac1q+\frac n2} \|F\|_{L_t^{\widetilde{q}'}L_x^1}.
\end{equation*}
Using the fact that $\big\|e^{i(t-s)\Delta}g\big\|_{L_x^\infty}\leq
C|t-s|^{-n/2}\|g\|_{L_x^1}$ (dispersive estimate), this follows from \eqref{t-del} and Young's inequality.

Now we fix $\wtr,r$ such that $(1/\widetilde{r}',1/r)\in \Delta
QRS\setminus( [Q,S]\cup[Q,R])$. We claim that there is
$(1/\widetilde{q}',1/q)\in[0,1]\times[0,1]$ which satisfies
\eqref{scale} and
\begin{equation}\label{locp} -\frac
n2(\frac1r+\frac1{\widetilde{r}'}-1)<\frac1q\leq\frac1{\widetilde{q}'}
< 1+\frac n2(\frac1r+\frac1{\widetilde{r}'}-1).
\end{equation}
Indeed, since $(1/\widetilde{r}',1/r)\in \Delta QRS\setminus(
[Q,S]\cup[Q,R])$, it follows that
\begin{equation}\label{567}
0\leq1-\frac n2(\frac1{\widetilde{r}'}-\frac1r)<1-n(\frac1{\widetilde{r}'}-\frac12)
<1+\frac n2(\frac1r+\frac1{\widetilde{r}'}-1)<1.
\end{equation}
The third inequality in \eqref{567} says that
$(1/\widetilde{r}',1/r)$ lies above the line joining $Q, R$. Hence,
there exists $1<\wtq<\infty $ such that
\begin{equation}\label{5678}
1-n(\frac1{\widetilde{r}'}-\frac12)<\frac1{\widetilde{q}'}<1+\frac
n2(\frac1r+\frac1{\widetilde{r}'}-1).
\end{equation}
(Note that the first inequality  is also one of the necessary
conditions in \eqref{t-cond}.) Now just set $\frac1q=\frac1{\wtp q}
+\frac n2\left( \frac1{\wtp r}-\frac1{r}\right)-1$.
So, \eqref{scale} is satisfied obviously. Then the first inequality
in \eqref{567} gives the second  in \eqref{locp}, and the first
in \eqref{5678} implies the first in \eqref{locp}.
From \eqref{locp}, we can find a small neighborhood $V$ of
$(1/\widetilde{r}',1/r)$, contained in $\Omega$, such that for
$(1/a,1/b)\in V$
\[-\frac n2(\frac1b+\frac1 a-1)<\frac1q\leq\frac1{\widetilde{q}'} <
1+\frac n2(\frac1b+\frac1{a}-1).\] Therefore, by \eqref{lolo} we
have for $(1/a,1/b)\in V$
\begin{equation}
\label{aa}
\|\delta^{-1}T_\delta F\|_{L_x^{b}L_t^q}\le
C\delta^{\frac1\wtq-\frac1q+\frac n2(\frac1{a}-\frac1{b})-1}
\|F\|_{L_x^{a}L_t^{\widetilde{q}'}}.
\end{equation}

Once this is obtained, we can prove the desired estimates by repeating the argument in the proof of Proposition~\ref{basic}. In fact, we consider a point
$(1/a_0,1/b_0)\in V$ on the line $1/a-1/b=1/\widetilde{r}'-1/r$ and
choose two points $(1/a_0,1/b_i)\in V,\, i=1,2,$ such that
$$\frac1{\widetilde{q}'}-\frac1q+\frac n2(\frac1{a_0}-\frac1{b_1})-1
<0<\frac1{\widetilde{q}'}-\frac1q+\frac
n2(\frac1{a_0}-\frac1{b_2})-1.$$ Then replacing $(a,b)$ with
$(a_0,b_1)$, $(a_0,b_2)$ in  \eqref{aa}, we have two estimates to
which we can  apply Lemma~\ref{summation} with
$\varepsilon_i=|\frac1{\widetilde{q}'}-\frac1q+\frac
n2(\frac1{a_0}-\frac1{b_i})-1|$. Hence, we get
$$\Big\|\sum_{\delta\in2^{\mathbb{Z}}}\delta^{-1}T_\delta F\Big\|_{L_x^{b_0,\infty}L_t^q}\le
C\|F\|_{L_x^{a_0}L_t^{\widetilde{q}'}}$$ for all $(1/a_0,1/b_0)\in V$
if  $1/a-1/b=1/\widetilde{r}'-1/r$. We now interpolate these
estimates to get the strong type, in particular,  at $(1/\wtr,1/r)$.
This completes the proof.

\subsubsection*{\textbf{Proof of Theorem \ref{thm1d}}}

First we claim that for $1\le \wtr,\wtq\le 2\le r,q\le \infty$, and
$0<\delta\ll 1$,
\begin{equation}\label{1d}
\|T_\delta F\|_{\mixq}\le
C\delta^{\frac1{\wtr}-\frac1{r}+\frac12(\frac1\wtq-\frac1q)}\|F\|_{\mixx
{\wtr}{\wtq}}
\end{equation}
whenever $\widehat F$ is supported in $\{(\xi,\tau)\in \mathbb
R\times \mathbb R: |\xi|\le 1, |\tau|\sim  1\}$. From \eqref{compu}
we see that the Fourier transform of $T_\delta$ is essentially
supported in the $\delta$-neighborhood of $\{(\xi,\tau):
\tau=-|\xi|^2, |\tau|\sim 1\}$. Hence it is sufficient to show
\eqref{1d} by assuming that the Fourier support of $F$ is contained in
$\{(\xi,\tau)\in \mathbb R\times \mathbb R: |\xi|\sim 1,
|\tau|\lesssim  1\}$. The contribution from the other region is
negligible.

Under this  assumption, by \eqref{compu},
Plancherel's theorem in $t$, and H\"older's inequality it follows that
\begin{align*}
\|T_\delta F(x,\cdot) \|_{L^2_t}
\le  &C\Big(\int
\Big|\int_{1/2\leq|\xi|\leq2} e^{ix\xi}\,\widehat\phi(\frac{\tau+|\xi|^2}\delta) \widehat F(\xi,\tau)d\xi \Big|^2 d\tau\Big)^\frac12 \\
\le  & C\delta^\frac12\Big(\int
\int_{1/2\leq|\xi|\leq2}\big|\widehat F(\xi,\tau)\big|^2  d\xi  d\tau\Big)^\frac12 .
\end{align*}
Plancherel's theorem gives $\|T_\delta
F\|_{\mixx\infty 2}\le C\delta^\frac12\|F\|_{\mixx 22}$.
By this and duality
we have $\|T_\delta F\|_{\mixx\infty 2}\le C\delta\|F\|_{\mixx 12}$,
and from dispersive estimate $\|T_\delta F\|_{\mixx\infty\infty}\le
C\delta^\frac32\|F\|_{\mixx 11}$. Interpolation between these two
estimates gives for $1\le\wtq\le 2\le q\leq\infty$
\[
\|T_\delta F\|_{\mixx \infty q}\le
C\delta\delta^{\frac12(\frac1\wtq-\frac1q)}\|F\|_{\mixx 1\wtq}.
\]

Let $Q\subset \mathbb R^{1+1}$ be a cube of
side length $\delta^{-1}$ and $\widetilde Q$ be the cube of side
length $C\delta^{-1}$ which has the same center as $Q$. Here $C>0$
is a sufficiently large constant. By H\"older's inequality  we have  for $1\le \wtr,\wtq\le 2\le
r,q\le \infty$, and $0<\delta\ll 1$,
\begin{equation}\label{1dlocal}\|T_\delta (\chi_{\widetilde Q}F)\|_{\mixx rq(Q)}\le C \delta^{\frac1{\wtr}-\frac1{r}+\frac12(\frac1\wtq-\frac1q)}
\| F\|_{\mixx \wtr\wtq}.
\end{equation}
We now deduce \eqref{1d} from this. Firstly,  from the assumption that the Fourier transform of $F$ is contained in
$\{(\xi,\tau)\in \mathbb R\times \mathbb R: |\xi|\sim 1\},$ we observe that  $T_\delta$ is localized at scale
$\delta^{-1}$ in $x$. More precisely, the kernel $K_\delta$
of $T_\delta$ satisfies that
\[ |K_\delta(x,t)|\le C\delta^M
\,\chi_{[1/2\delta,\,2/\delta]}(|t|) (1+|x|)^{-M}
\]
for
any $M$ if $|x|\ge C\delta^{-1}$. (See \eqref{kernel} and the
paragraph below it).  Hence it follows that  if $(x,t)\in Q$, then
\begin{equation}\label{loc} | T_\delta F(x,t)|\le C |\ T_\delta \chi_{\widetilde Q} F(x,t)| + C\delta^{M}(\mathcal E_\delta\ast |F|)(x,t)
\end{equation}
for some large $M>0$ where $\mathcal E_\delta=\chi_{[1/2\delta,\,2/\delta]}(t) (1+|x|)^{-M}$.
Let $\{Q\}$ be a collection of (essentially disjoint) cubes of side length $\delta^{-1}$ which
cover $\mathbb R^{1+1}$. Then by \eqref{loc} we have
\[\| T_\delta F\|_{\mixx rq}\le C \Big\|\sum_{Q}\chi_Q| T_\delta (\chi_{\widetilde Q}F)|\Big\|_{\mixx rq} + C \delta^M \|F\|_{\mixx \wtr\wtq}\]
because $r,q\geq \wtr, \wtq$.
Hence, by Minkowski's inequality and \eqref{1dlocal}  we have
\begin{align*}
\| T_\delta F\|_{\mixx rq}
&\le C\Big(\sum_{Q}\| T_\delta (\chi_{\widetilde Q}F)\|_{\mixx rq(Q)}^p\Big)^\frac1p+ C \delta^M \|F\|_{\mixx \wtr\wtq}\\
& \le C \delta^{\frac1{\wtr}-\frac1{r}+\frac12(\frac1\wtq-\frac1q)} \Big(\sum_{Q}\|\chi_{\widetilde Q}F\|_{\mixx \wtr\wtq}^p\Big)^\frac1p+ C \delta^M \|F\|_{\mixx \wtr\wtq}
\end{align*}
where $p=\min(q,r)$. Since $r,q\geq \wtr, \wtq$, using Minkowski's inequality again,
 we get the desired inequality \eqref{1d}.

\medskip

For $j\in\mathbb{Z}$, let us define the multiplier operators
$\mathcal P_jF$  by
$$\widehat{\mathcal P_jF}(\xi,\tau)=\phi(2^j\tau)\widehat{F}(\xi,\tau).$$
Using \eqref{dyadic}, \eqref{1d}, Lemma \ref{summation}, and
repeating the previous argument, one can show that
\begin{align}\label{ess}
\Big\|\mathcal P_0\Big(\int_{-\infty}^t \schrd F(s)
ds\Big)\Big\|_{\mixx {r}q}
&=\big\|\sum_{\delta\in\mathbb{Z}}\delta^{-1}T_\delta (\mathcal
P_0F)\big\|_{\mixx {r}q}
\\
&\le C\|\mathcal P_0F\|_{\mixx {\wtr}{\wtq}} \nonumber
\end{align}
provided that  $1< \wtr< 2< r<\infty $,  $1\le \wtq< 2<q\le
\infty$, and $\frac1{\wtr}-\frac1{r}+\frac12(\frac1\wtq-\frac1q)\ge
1$. In fact, the case
$\frac1{\wtr}-\frac1{r}+\frac12(\frac1\wtq-\frac1q)>1 $ can be
obtained by direct summation because $\|T_\delta F\|_{\mixq}\le
C\|F\|_{\mixx {\wtr}{\wtq}}$ for $\delta\ge 1$ . Now by rescaling
it follows that
\begin{equation*}
\Big\|\mathcal P_j\Big(\int_{-\infty}^t \schrd F(s)
ds\Big)\Big\|_{\mixx {r}q}\le
C2^{j(1-\frac12(\frac1\wtr-\frac1r)-(\frac1\wtq-\frac1q))}\|\mathcal
P_jF\|_{\mixx {\wtr}{\wtq}}.
\end{equation*}
Hence we have uniform bounds if
$\frac12(\frac1\wtr-\frac1r)+\frac1\wtq-\frac1q=1$ and the condition
for \eqref{ess} is satisfied. Now note that if
$\frac1\wtr-\frac1r\ge \frac23$, there are $\wtq, q$ satisfying
$\frac12(\frac1\wtr-\frac1r)+\frac1\wtq-\frac1q=1$ and
$\frac1{\wtr}-\frac1{r}+\frac12(\frac1\wtq-\frac1q)\ge 1$.
Therefore, if  $\frac1\wtr-\frac1r\ge \frac23$ and  $1< \wtr< 2<
r<\infty $,  we have for some $1<\wtq<2< q<\infty$
\begin{equation*}
\Big\|\mathcal P_j(\int_{-\infty}^t \schrd F(s) ds)\Big\|_{\mixx
{r}q}\le C\|\mathcal P_jF\|_{\mixx {\wtr}{\wtq}}.
\end{equation*}
This can be put together using Littwood-Paley theorem in $t$. Since
$1< \wtr< 2< r<\infty $ and $1<\wtq\le 2\le q<\infty$, by
Littlewood-paley theorem and Minkowski's inequality
\begin{align*}
\big\|\sum_{j}T(\mathcal P_jF)\big\|_{L_x^rL_t^q}&\lesssim
\big\|\big(\sum_{j}\|T(\mathcal
P_jF)\|_{L_t^q}^2\big)^{1/2}\big\|_{L_x^r}
\lesssim\big(\sum_{j}\|T(\mathcal P_jF)\|_{L_x^rL_t^q}^2\big)^{1/2}\\
&\lesssim\big(\sum_{j}\|\mathcal
P_jF\|_{L_x^{\widetilde{r}^\prime}L_t^{\widetilde{q}'}}^2\big)^{1/2}
\lesssim\big\|\big(\sum_{j}\|\mathcal P_jF\|_{L_t^{\widetilde{q}'}}^2\big)^{1/2}\big\|_{L_x^{\widetilde{r}^\prime}}\\
&\lesssim\|F\|_{L_x^{\widetilde{r}^\prime}L_t^{\widetilde{q}'}}.
\end{align*}
This completes the proof.

\section{Necessary conditions} \label{secsec4}

By constructing some counterexamples, we show the conditions \eqref{q2}, \eqref{q3}.

\subsubsection*{Proof of \eqref{q3}}
Let $M>0$ be a sufficiently large number and let us set
$$\widehat{F}(\xi,\tau)=\varphi(|\xi|)\psi(M^{1/2}(\tau+1)),$$
where $\psi\in\mathcal{S}(\mathbb{R})$ with $\text{supp }\mathcal
F^{-1}(\psi)\in[0,1]$ and $\varphi$ is a smooth function supported
in $(1/2,2)$ with $\varphi(1)=1$. Note that if $|t|\sim M$, then we
may write
$$\int_0^te^{i(t-s)\Delta}F(s)ds=\int_{\mathbb{R}^n}e^{ix\cdot\xi}e^{-it|\xi|^2}\widehat{F}(\xi,-|\xi|^2)d\xi$$
because the support of $F(y,\cdot)$ is contained in $[0,M^{1/2}]$
for all $y$. Since we have
$\int_{S^{n-1}}e^{ix\cdot\xi}d\sigma(\xi)=C|x|^{-\frac{n-2}{2}}J_{\frac{n-2}{2}}(|x|),$
by the asymptotic behavior of Bessel function \cite{St}, we see
that
$$\bigg|\int_0^te^{i(t-s)\Delta}F(s)ds\bigg|\sim
|x|^{-\frac{n-1}{2}}|I(x,t)|$$ for sufficiently large $|x|$, where
\begin{align*}
I(x,t)= \int_0^\infty
r^{-\frac{n-1}{2}}\varphi(r)\psi(M^{1/2}(r^2-1))e^{-itr^2}\cos(r|x|-\pi(n-1)/4)dr.
\end{align*}
We  now set $\widetilde \varphi=r^{-\frac{n-1}{2}}\varphi(r)$. Then
we have
\begin{align*}
I(x,t)&=\int
\widetilde\varphi(1)\psi(M^{1/2}(r^2-1))e^{-itr^2}\cos(r|x|-\pi(n-1)/4)dr\\
&+\int (\widetilde\varphi(r)-
\widetilde\varphi(1))\psi(M^{1/2}(r^2-1))e^{-itr^2}\cos(r|x|-\pi(n-1)/4)dr.
\end{align*}
By the rapid decay of $\psi$, the support of $\widetilde\varphi$,
and the mean value theorem it is easy to see that the second term in
the right hand side is $O(M^{-1})$.  Similarly, for some large
constant $B>0$
\begin{align*}
I(x,t)&=\int_{|r-1|\leq BM^{-1/2}}
\psi(M^{1/2}(r^2-1))e^{-itr^2}\cos(r|x|-\pi(n-1)/4)dr\\
&+O(M^{-1/2}/B^{100})+O(M^{-1}).
\end{align*}
So we get
\begin{align*}
I(x,t)=\frac12\Big(e^{-i\pi(n-1)/4}I_-(x,t)+e^{i\pi(n-1)/4}I_+(x,t)\Big)+O(M^{-1/2}/B^{100}),
\end{align*}
where
$$I_\pm (x,t)=\int_{|r-1|\leq BM^{-1/2}} e^{-i(tr^2\pm r|x|)}\psi(M^{1/2}(r^2-1))dr.$$
By changing variables $r\rightarrow r+1$ and $r\rightarrow
M^{-1/2}r$, it follows that
$$I_-(x,t)=
e^{-i(t-|x|)}M^{-1/2}\int_{|r|\leq B}
e^{-i\big(tM^{-1}r^2+(2t-|x|)M^{-1/2}r\big)}\psi(2r+M^{-1/2}r^2)dr.$$
So, if $|t|\sim M/B^2$ and $|2t-|x||\lesssim M^{1/2}/B$, we get
$|I_-(x,t)|\gtrsim M^{-1/2}.$
On the other hand, we see $|I_+(x,t)|\lesssim B^2M^{-1}$ if $|t|\sim M$ and $|x|\sim M$.
Consequently, if $|t|\sim M$, $|x|\sim M$ and $|2t-|x||\lesssim M^{1/2}$, then $|I(x,t)|\gtrsim M^{-1/2}$.
Therefore, we see that
$$\bigg\|\int_0^te^{i(t-s)\Delta}F(s)ds\bigg\|_{L_x^rL_t^q}
\gtrsim M^{-n/2}M^{1/2q}M^{n/r}.$$
Also, it is easy to see that
$\|F\|_{L_x^{\widetilde{r}'}L_t^{\widetilde{q}'}}
\lesssim M^{-1/2}M^{1/2\widetilde{q}'}.$ Hence, the estimate \eqref{iinhomo} implies
that
\[ M^{-n/2}M^{1/2q}M^{n/r}\lesssim M^{-1/2}M^{1/2\widetilde{q}'}.\]
By letting $M\rightarrow\infty$, we get the  first inequality in \eqref{q3},
and the second one follows from duality.
\qed

\subsubsection*{Proof of \eqref{q2}}
Let us denote
\[U(F)(x,t)=\int_0^t \schrd F(s) ds.\]
Then, using the kernel of $e^{it\Delta}$,  we have
\[U(F)(x,t)=\int_0^t\int_{\mathbb{R}^n}(t-s)^{-n/2}e^{\frac{i|x-y|^2}{4(t-s)}}F(y,s)dyds.\]
For $0<\delta\ll1$, we set
$$F(y,s)=\Phi(\delta^{1/2}(y_1+2s),\delta^{1/2}\overline{y},\delta s)e^{-i(y_1+s)},$$
where
$\Phi(y_1,\overline{y},s)=\chi(y_1)\chi(y_2)\cdot\cdot\cdot\chi(y_n)\chi(s)$ and $\chi=\chi_{[0,1]}$.
By the change of variables $y_1\rightarrow y_1-2s$, we see that
$$e^{i(x_1+t)}U(F)(x,t)=\int_0^t\int_{\mathbb{R}^n}(t-s)^{-n/2}e^{iP(x,y,t,s)}
\Phi(\delta^{1/2}y_1,\delta^{1/2}\overline{y},\delta s)dyds,$$
where
$$P(x,y,t,s)=\frac{|\overline{x}-\overline{y}|^2+(x_1-y_1+2t)^2}{4(t-s)}.$$
Note that $|P(x,y,t,s)|\lesssim1$
if $(x_1+2t)^2\leq\delta^{-1}$, $|\overline{x}|\leq\delta^{-1/2}$
and $100\delta^{-1}\leq t\leq200\delta^{-1}$.
So we see
\begin{align*}
|U(F)(x,t)|&\gtrsim\delta^{n/2}\bigg|\int_0^t\int_{\mathbb{R}^n}\Phi dyds\bigg|\gtrsim\delta^{-1},
\end{align*}
provided that $(x_1+2t)^2\leq\delta^{-1}$, $|\overline{x}|\leq\delta^{-1/2}$
and $100\delta^{-1}\leq t\leq200\delta^{-1}$. Hence
$$\|U(F)\|_{L_x^rL_t^q}\gtrsim\delta^{-1}\delta^{-1/2q}\delta^{-(n+1)/2r}.$$
On the other hand, $\|F\|_{L_x^{\widetilde{r}^\prime}L_t^{\widetilde{q}^\prime}}
\leq C\delta^{-1/2\widetilde{q}^\prime}\delta^{-(n+1)/2\widetilde{r}^\prime}$. From \eqref{iinhomo}
we get
\[\delta^{-1}\delta^{-1/2q}\delta^{-(n+1)/2r}\lesssim \delta^{-1/2\widetilde{q}^\prime}\delta^{-(n+1)/2\widetilde{r}^\prime}.\]
By letting $\delta\rightarrow0$, we  get \eqref{q2}.
\qed

\section*{Acknowledgment}
The first named author was supported in part by NRF grant
2012-008373 (Republic of Korea).

%%%%%%%%%%%%%%%%%%%%%%%%%%%%%%%%%%%%%%%%%%%%%%%%%%%%%%%%%%%%%%%%%%%%%%%%%%%%%%%%%%%%%%%%%%%%%%%%%%%%%%%%%%%%%%%%%%

\end{document}